\newcommand{\R}{\mathbb{R}}
\newcommand{\C}{\mathbb{C}}
\newcommand{\s}{\mathbb{S}}
\newtheoremstyle{thm}{}{}{\slshape}{}{\scshape}{.}{0.5em}{}
\newtheoremstyle{def}{}{}{}{}{\scshape}{.}{0.5em}{}
\newtheoremstyle{rmk}{}{}{}{}{\scshape}{.}{0.5em}{}
\newtheoremstyle{claim}{}{}{}{}{\slshape}{.}{0.5em}{}
\theoremstyle{thm}
\newtheorem{theorem}{Theorem}
\newtheorem{proposition}[theorem]{Proposition}
\newtheorem*{conjecture*}{Conjecture}
\newtheorem*{theorem*}{Theorem}
\theoremstyle{def}
\newtheorem{definition}[theorem]{Definition}
\newcommand{\K}{K\"{a}hler}
\newtheorem{remark}[theorem]{Remark}
\title[$\eta$-Einstein Sasakian immersions in non-compact Sasakian space forms]{$\eta$-Einstein Sasakian immersions in non-compact Sasakian space forms}
\author{Gianluca Bande}
\address{Gianluca Bande, Dipartimento di Matematica e Informatica \\
         U\-ni\-ver\-si\-t\`a di Cagliari, Italy.}
         \email{gbande@unica.it}
\author{Beniamino Cappelletti--Montano}
\address{Beniamino Cappelletti--Montano, Dipartimento di Matematica e Informatica \\
         U\-ni\-ver\-si\-t\`a di Cagliari, Italy.}
         \email{b.cappellettimontano@unica.it}
\author{Andrea Loi}
\address{Andrea Loi, Dipartimento di Matematica e Informatica \\
         Universit\`a di Cagliari, Italy.}
         \email{loi@unica.it}
\date{\today}
\keywords{Sasakian; Sasaki--Einstein;  $\eta$-Einstein; Sasakian immersion; \K\ manifolds; \K\ immersions, Sasakian space form}
\subjclass[2010]{53C25; 53C55}
\thanks{The authors were supported by Prin 2015 -- Real and Complex Manifolds: Geometry, Topology and Harmonic Analysis -- Italy and by Fondazione di Sardegna (Project STAGE) and Regione Autonoma della Sardegna (Project KASBA). All the authors are  members of INdAM-GNSAGA - Gruppo Nazionale per le Strutture Algebriche, Geometriche e le loro Applicazioni.}
\begin{document}
\begin{abstract}
The aim of this paper is to study Sasakian immersions  of (non-compact)  complete regular Sasakian manifolds into the Heisenberg group and into $ \mathbb{B}^N\times \mathbb{R}$ equipped with their standard Sasakian structures. We obtain a complete classification of such  manifolds  in the $\eta$-Einstein case. 
\end{abstract}
\maketitle

\section{Introduction}

%
Sasakian geometry is considered as the odd-dimensional counterpart of K\"ahler geometry. Despite the K\"ahler case, where the study of K\"ahler immersions is well developed, due to the seminal work of Calabi \cite{Cal}  (see also \cite{LoiZedda-book} for a modern treatment and an account on the subject), in the Sasakian setting there are few results. Most of the Sasakian results are concerned with finding conditions which ensure that a Sasakian submanifold is totally geodesic or similar geometric properties (see, for instance, \cite{kenmotsu, kon73, kon76}).


In \cite{CML} the second and the third authors studied Sasakian immersions into spheres. In particular they proved the following classification result:
\begin{theorem*}[\cite{CML}]\label{mainteor3-CML}
Let $S$ be a $(2n+1)$-dimensional  compact $\eta$-Einstein Sasakian manifold. Assume that there exists a Sasakian immersion of $S$ into $\mathbb{S}^{2N+1}$. If $N = n+2$ then  $S$ is Sasaki equivalent to  $\mathbb{S}^{2n+1}$ or to the Boothby-Wang fibration over $Q_n$, where $Q_n\subset \C P^{n+1}$
is the complex quadric equipped with the restriction of the Fubini--Study form of $\C P^{n+1}$.
\end{theorem*}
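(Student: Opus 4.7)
The plan is to reduce the Sasakian immersion problem to the corresponding K\"ahler immersion problem via the Boothby--Wang correspondence. A compact regular Sasakian manifold $S$ of dimension $2n+1$ is the total space of a principal $S^{1}$-bundle over a compact K\"ahler manifold $(M,\omega)$ of complex dimension $n$, with the Reeb field generating the fibre action. Under the $\eta$-Einstein hypothesis, the transverse K\"ahler structure descends to a K\"ahler--Einstein structure on $M$. Similarly, $\mathbb{S}^{2N+1}$ is the Hopf $S^{1}$-bundle over $(\mathbb{C}P^{N},\omega_{FS})$, and since a Sasakian immersion is, by definition, equivariant with respect to the Reeb flows, the immersion $\varphi:S\hookrightarrow \mathbb{S}^{2N+1}$ descends to a K\"ahler immersion $f:M\hookrightarrow \mathbb{C}P^{n+2}$. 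The first step would be to spell out this dictionary and record that $f$ is holomorphic and isometric and that $M$ is a compact K\"ahler--Einstein manifold of positive scalar curvature.

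The second, and core, step is to invoke the classification of full K\"ahler--Einstein submanifolds of $\mathbb{C}P^{n+2}$ of complex dimension $n$, that is, of codimension at most two. After reducing to the full case by replacing $\mathbb{C}P^{n+2}$ with the smallest linear subspace containing $f(M)$ (Calabi's rigidity), the classical classification in codimension at most two (going back to Smyth, Chern--do Carmo--Kobayashi, Ros, Umehara and others) leaves only two possibilities: $M$ is the totally geodesic $\mathbb{C}P^{n}\hookrightarrow \mathbb{C}P^{n}$, or $M$ is the complex quadric $Q_{n}\hookrightarrow \mathbb{C}P^{n+1}$ with the induced Fubini--Study form. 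This is where I expect the main obstacle to lie: one must in particular rule out any genuinely full K\"ahler--Einstein immersion of exact codimension two into $\mathbb{C}P^{n+2}$, which requires a delicate Gauss-equation analysis of the second fundamental form together with the Einstein condition.

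The final step is to lift the classification back to $S$. If $f(M)$ is totally geodesic in $\mathbb{C}P^{n+2}$, then the preimage in $\mathbb{S}^{2N+1}$ of $\mathbb{C}P^{n}$ under the Hopf projection is a totally geodesic round sphere $\mathbb{S}^{2n+1}$, which is Sasaki equivalent to the standard $(2n+1)$-sphere. If $f(M)=Q_{n}$, then the uniqueness of the Boothby--Wang $S^{1}$-bundle over $Q_{n}$ with integral K\"ahler class prescribed by the restriction of $\omega_{FS}$ forces $S$ to be Sasaki equivalent to the corresponding Boothby--Wang fibration. The rest is essentially bookkeeping between $S$ and its K\"ahler quotient $M$.
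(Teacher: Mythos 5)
Your strategy is exactly the one followed in \cite{CML} (and summarised in the introduction of the present paper): pass to the Boothby--Wang quotient, descend the Sasakian immersion to a K\"ahler immersion of the K\"ahler--Einstein base into $\C P^{n+2}$, invoke the classification of K\"ahler--Einstein submanifolds of codimension at most two of a complex projective space, and lift the two resulting bases $\C P^{n}$ and $Q_{n}$ back to $\mathbb{S}^{2n+1}$ and to the Boothby--Wang fibration over $Q_n$. The descent of the immersion, the identification of the base, and the lifting are all as in the paper. One bibliographical precision: the codimension-two step you correctly single out as the hard core (ruling out full K\"ahler--Einstein immersions of exact codimension two) is K.~Tsukada's theorem on Einstein--K\"ahler submanifolds of codimension $2$ in complex space forms, not the hypersurface results of Smyth or Chern--do Carmo--Kobayashi, which only cover codimension one.

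There is, however, one genuine gap at the very first step. The theorem does not assume that $S$ is regular, whereas your opening sentence does: ``a compact \emph{regular} Sasakian manifold $S$ \ldots is the total space of a principal $S^1$-bundle.'' For a merely quasi-regular or irregular compact Sasakian manifold the space of Reeb leaves is not a manifold, so the Boothby--Wang quotient $M$ on which your entire reduction rests need not exist. Regularity must be \emph{derived} from the hypotheses: the Reeb foliation of $\mathbb{S}^{2N+1}$ is the (regular) Hopf fibration, a Sasakian immersion is in particular a foliated immersion between the Reeb foliations of equal leaf dimension, and regularity is then inherited by $S$ by Harada's result \cite[Proposition 3.1]{harada73-II} --- precisely the compact case of Proposition \ref{prop-regularity} above. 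Once this step is inserted, your argument runs as in \cite{CML}.
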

Since the (Sasakian) sphere is one of the three ``models'' of Sasakian space forms, it is quite natural to study as a second step the immersions into Sasakian space forms.

In this paper we give a complete characterisation of Sasakian immersions of complete, regular, $\eta$-Einstein Sasakian manifolds into a non-compact Sasakian space form $M(N, c)$, proving the following:
\begin{theorem}\label{mainteor1}
Let $S$ be a $(2n+1)$-dimensional connected, complete,  regular $\eta$-Einstein Sasakian manifold. Suppose that there exists $p\in S$, an open neightborhood $U_p$ of $p$ and a Sasakian immersion $\phi: U_p \rightarrow M(N,c)$, where $c\leq -3$. Then $S$ is Sasaki equivalent to $M(n,c)/ \Gamma$ where $\Gamma$ is some discrete subgroup of the Sasakian-isometry group of $M(n,c)$. Moreover, if $U_p=S$ then $\Gamma=\{1\}$ and $\phi$ is, up to a Sasakian transformation of $M(N,c)$, given by 
$$
\phi(z,t)=(z, 0, t+c)
$$
\end{theorem}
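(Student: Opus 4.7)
The natural strategy is to reduce the Sasakian problem to a Kähler problem by quotienting out the Reeb foliation, thereby putting ourselves in a setting where Calabi-type rigidity results apply. Since $S$ is complete and regular, the leaf space $B := S/\mathcal{F}_\xi$ carries a canonical Kähler structure, and the $\eta$-Einstein condition on $S$ translates into $B$ being Kähler–Einstein (with the transverse Einstein constant determined explicitly by the $\eta$-Einstein constants of $S$). The non-compact model $M(N,c)$ has the analogous transverse structure: when $c=-3$ its Kähler quotient is the flat $\mathbb{C}^N$, and when $c<-3$ it is the complex hyperbolic space $\mathbb{C}H^N$ with sectional curvature normalized by $c$.

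The first step is to show that the Sasakian immersion $\phi: U_p \to M(N,c)$ descends to a local Kähler immersion $\bar\phi: U_p/\mathcal{F}_\xi \to \mathbb{C}H^N$ (resp.\ $\mathbb{C}^N$). This is immediate from the fact that a Sasakian immersion is transverse-holomorphic and intertwines the Reeb vector fields, hence maps leaves to leaves. Thus one is left with a local Kähler–Einstein immersion of $B$ (or of an open subset) into a complex space form of non-positive holomorphic sectional curvature.

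The core of the argument is then to invoke the appropriate rigidity theorem: any Kähler–Einstein submanifold of $\mathbb{C}H^N$ is totally geodesic, by Umehara's theorem (and similarly a Ricci-flat Kähler submanifold of $\mathbb{C}^N$ is a totally geodesic $\mathbb{C}^n$). This forces $\bar\phi(U_p/\mathcal{F}_\xi)$ to lie in a totally geodesic $\mathbb{C}H^n \subset \mathbb{C}H^N$ (resp.\ $\mathbb{C}^n \subset \mathbb{C}^N$). The main obstacle will be to verify carefully that Umehara's rigidity applies in our local and non-compact setting and that the $\eta$-Einstein constants of $S$ indeed produce the transverse Einstein constant matching that of $\mathbb{C}H^n$; this is the point where the hypothesis $c\leq -3$ is essential, as it rules out positive transverse scalar curvature in which case the analogous theorem (Calabi, Hulin) would allow many more submanifolds.

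Once the local leaf space is identified with an open subset of $\mathbb{C}H^n$ (resp.\ $\mathbb{C}^n$), a standard analytic continuation/developing map argument, using the completeness and regularity of $S$, promotes this local identification to a global Sasaki-equivalence $S \simeq M(n,c)/\Gamma$ for some discrete subgroup $\Gamma$ of the Sasakian isometry group of $M(n,c)$. Finally, if $U_p = S$, then $\phi$ is a global Sasakian immersion from $M(n,c)/\Gamma$ into $M(N,c)$; the universal cover must embed as the standard totally geodesic $M(n,c)\hookrightarrow M(N,c)$, which forces $\Gamma = \{1\}$ and pins down the explicit form $\phi(z,t)=(z,0,t+c)$ up to an ambient Sasakian transformation, by a direct computation in the standard coordinates of the Heisenberg group or of $\mathbb{B}^N\times\mathbb{R}$.
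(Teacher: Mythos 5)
Your overall strategy is the same as the paper's: pass to the Kähler leaf space of the Reeb foliation (using completeness and regularity, via Reinhart's fibration theorem), note that the $\eta$-Einstein condition makes the base Kähler--Einstein, descend the Sasakian immersion to a local Kähler immersion into $\C^N$ or $\mathbb{B}^N$, apply Umehara's rigidity, and finish with Tanno's classification and Calabi rigidity. The one step where your write-up has a genuine gap is the passage from the \emph{local} conclusion to the \emph{global} one. The hypothesis only gives an immersion of a neighbourhood $U_p$ of a single point, so Umehara only tells you that the leaf space is flat (resp.\ complex hyperbolic) on the open set $\pi(U_p)$. You dismiss the propagation to all of $S$ as ``a standard analytic continuation/developing map argument, using the completeness and regularity of $S$,'' but completeness and regularity are not the ingredients that make this work: a priori a Kähler metric could have constant holomorphic sectional curvature on an open set and not elsewhere. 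The missing ingredient is that the Kähler--Einstein metric on the leaf space is \emph{real analytic} (DeTurck--Kazdan), so that constancy of the holomorphic sectional curvature on a nonempty open subset of the connected base propagates everywhere (the paper phrases this via Calabi's Theorems 4 and 10 on local resolvability); only then does the O'Neill-type formula relating transverse curvature to $\phi$-sectional curvature give constant $\phi$-sectional curvature $c\le -3$ on all of $S$, and Tanno's theorem applies. Without naming analyticity and why it holds, this step --- which is precisely what makes the theorem strong --- is not justified.

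A smaller point on the case $U_p=S$: to get $\Gamma=\{1\}$ you must rule out both a nontrivial quotient of the base and circle fibers in the Reeb direction. The paper does the former by Calabi rigidity applied to the now-complete base (forcing $K=\C^n$ or $\mathbb{B}^n$, hence a trivial fibration), and the latter by observing that the leaves of $M(N,c)$ are copies of $\R$, so a circle leaf of $S$ cannot immerse into them. Your assertion that ``the universal cover must embed as the standard totally geodesic $M(n,c)\hookrightarrow M(N,c)$, which forces $\Gamma=\{1\}$'' can be made to work (the standard embedding is injective, so it cannot factor through a nontrivial quotient), but as written it presupposes that the immersion of $S$ lifts to the standard one on the universal cover, which is exactly what needs the two exclusions above. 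Spell these out and the argument closes.
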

Theorem \ref{mainteor1} is a strong generalisation of \cite[Theorem 3.2]{kenmotsu} which asserts that a complete, $\phi$-invariant, $\eta$-Einstein submanifold of codimension $2$ of the $(2N+1)$-dimensional Heisenberg group is necessarily a totally geodesic submanifold Sasaki-equivalent to a copy of a $(2N-1)$-dimensional Heisenberg group and similarly for totally geodesic submanifolds of $\mathbb{B}^N\times \mathbb{R}$, where $\mathbb{B}^N$ denotes the unit disc of $\C^N$ equipped with the hyperbolic metric. In fact in our result there is no restriction on the codimension and  we assume that we have a Sasakian immersion instead of a $\phi$-invariant submanifold. Moreover the immersion is not necessarily injective and is not assumed to be from the whole space but from an open neighbourhood of a point.

The general philosophy in \cite{CML} and in this paper is to take into account the transversal K\"ahler geometry of the Reeb foliation. When a regular Sasakian manifold is compact as in \cite{CML}, one can use the so-called Boothby-Wang construction \cite{Boothby-Wang}, which realises the space of leaves as a K\"ahler manifold which is the base of a principal $\s^1$-fibration. Then one translates the immersion problem into a K\"ahler immersion problem of the base spaces. 

Trying the same trick in the non-compact case is more complicated because the Boothby-Wang construction fails in general, even if the Sasakian manifold is regular. Nevertheless, the Reeb foliation has the strong property to be both a totally geodesic and a Riemannian foliation. Assuming the Sasakian manifold complete, one can appeal to the result of Reinhart \cite{R} which says that the space of leaves is the base space of a fibration, and once again translate the problem into one on K\"ahler immersions.


The paper contains two other sections. In Section 2 we recall the main definitions  and some foliation theory needed in the proof of Theorem 1 to whom Section 3 is dedicated.

\section{Preliminaries}\label{preliminari}

 A \emph{contact metric manifold} is a contact manifold $(S,\eta)$ admitting a Riemannian metric $g$ compatible with the contact structure, in the sense that, defined the $(1,1)$-tensor $\phi$ by $d\eta=2g(\cdot,\phi \cdot)$, the following conditions are fulfilled
\begin{equation}\label{contactmetric}
\phi^2 = -Id + \eta \otimes \xi, \quad g(\phi \cdot, \phi \cdot) = g - \eta \otimes \eta,
\end{equation}
where $\xi$ denotes the \emph{Reeb vector field} of the contact structure, that is the unique vector field on $S$ such that
\begin{equation*}
i_{\xi}\eta=1, \ \ i_{\xi}d\eta=0.
\end{equation*}
A contact metric manifold is said to be \emph{Sasakian}  if  the following  integrability condition is satisfied
\begin{equation}\label{normal}
N_{\phi} (X,Y):=[\phi X, \phi Y] +\phi ^2 [ X, Y]-\phi [X, \phi Y]-\phi[\phi X,  Y]= -d\eta(X,Y)\xi,
\end{equation}
for any vector fields $X$ and $Y$ on $S$. 

Two Sasakian manifolds $(S_{1},\eta_{1},g_{1})$ and $(S_{2},\eta_{2},g_{2})$ are said to be \emph{equivalent} if there exists a contactomorphism $F: S_{1} \longrightarrow S_{2}$ between them which is also an isometry, i.e.
\begin{equation}\label{equivalence1}
F^{\ast} \eta_{2} = \eta_{1}, \quad F^{\ast}g_{2} = g_{1}.
\end{equation}
One can prove that if   \eqref{equivalence1} holds  then $F$ satisfies also
\begin{equation*}
F_{{\ast}_x}\circ \phi_{1} = \phi_{2}\circ F_{{\ast}_x}, \quad F_{{\ast}_x}\xi_{{1}} = \xi_{{2}}
\end{equation*}
for any $x\in S_1$.
An isometric  contactomorphism $F: S \longrightarrow S$ from a Sasakian manifold $(S,\eta, g)$ to itself will be called a \emph{Sasakian transformation} of $(S,\eta, g)$.

It is a well-known fact \cite{bg} that the foliation defined by the Reeb vector field of a Sasakian manifold $S$ has a transversal K\"ahler structure. Using the theory of Riemannian submersions one can prove that the transverse geometry is K\"{a}hler-Einstein if and only if the Ricci tensor of $S$ satisfies the following equality
\begin{equation}\label{etaeinstein}
\textrm{Ric} = \lambda g + \nu \eta\otimes\eta
\end{equation}
for some constants $\lambda$ and $\nu$. Any Sasakian manifold satisfying \eqref{etaeinstein} is said to be \emph{$\eta$-Einstein} (see \cite{BoyerGalickiMatzeu06} for more details). 


 A remarkable property of $\eta$-Einstein Sasakian manifolds is that, contrary to Sasaki-Einstein ones, they are preserved by $\mathcal{D}_a$-homothetic deformations, that is the change of structure tensors of the form
\begin{equation}\label{Dhom}
\phi_{a}:= \phi, \quad \xi_{a}:=\frac{1}{a}\xi, \quad \eta_{a}:=a \eta, \quad g_{a}:=a g + a(a-1)\eta\otimes\eta
\end{equation}
where $a>0$. 





By a \emph{Sasakian immersion} (often called invariant submanifolds or Sasakian submanifolds in the literature) of a Sasakian manifold $(S_{1},\eta_{1},g_{1})$ into the Sasakian manifold $(S_{2},\eta_{2},g_{2})$ we mean an isometric immersion $\varphi: (S_{1},g_1)\longrightarrow (S_{2},g_2)$ that preserves the Sasakian structures, i.e. such that
\begin{gather}
\varphi^{\ast}g_{2} = g_{1}, \quad \varphi^{\ast} \eta_{2} = \eta_{1}, \label{sasakianimmersion1} \\
\varphi_{{\ast}}\xi_{{1}} = \xi_{{2}}, \quad \varphi_{{\ast}}\circ \phi_{1} = \phi_{2}\circ \varphi_{{\ast}}.\label{sasakianimmersion2}
\end{gather}
%
We refer the reader to the standard references \cite{Blair2010, bg} for a more detailed account of Riemannian contact geometry and Sasakian manifolds.
\subsection*{Sasakian space forms}

Recall that the curvature tensor of a Sasakian manifold is completely determined \cite{Blair2010} by its $\phi$\emph{-sectional} curvature, that is the sectional curvature of plane sections of the type $(X, \phi X)$, for $X$ a unit vector field orthogonal to the Reeb vector field. 

A \emph{Sasakian space form} is a connected, complete Sasakian manifold with constant $\phi$-sectional curvature. According to Tanno \cite{tan2} there are exactly three simply connected Sasakian space forms depending on the value $c$ of the $\phi$-sectional curvature: the standard Sasakian sphere up $\mathcal{D}_a$-homothetic deformation if $c > -3$, the Heisenberg group $\C^n \times \R$ if $c=-3$ and the hyperbolic Sasakian space form $\mathbb{B}^n \times \R$ if $c<-3$. Notice that each simply connected space form admits a fibration over a K\"ahler manifold and in the non-compact cases the fibration is trivial. 

We denote by $M(n,c)$  the simply connected $(2n+1)$-dimensional Sasakian space form with $\phi$-sectional curvature equal to $c$. Every connected, complete Sasakian space form is Sasakian equivalent to $M(n,c)/ \Gamma$, where $\Gamma$ is a discrete subgroup of the Sasakian transformation group of $M(n,c)$.


\subsection*{Immersions and regular foliations}
We recall some basic concepts from foliation theory (see e.g. \cite{MoerdijkMrcun, palais}). Let $M$ be a smooth manifold of dimension $n$. A foliation can be defined as a maximal foliation atlas on $M$, where a foliation atlas of codimension $q$ on $M$ is an atlas
\begin{equation*}
  \left\{\varphi_{i}: U_{i} \longrightarrow \mathbb{R}^{n}=\mathbb{R}^{p}\times\mathbb{R}^{q}\right\}_{i\in I}
\end{equation*}
of $M$ such that the change of charts diffeomorphisms $\varphi_{ij}$ locally takes the form
\begin{equation*}
\varphi_{ij}(x,y)=\left(g_{ij}(x,y),h_{ij}(y)\right) .
\end{equation*}
Each foliated chart is divided into \emph{plaques}, the connected components of
\begin{equation*}
\varphi_{i}^{-1}\left(\mathbb{R}^{p}\times\left\{y\right\}\right),
\end{equation*}
where $y\in\mathbb{R}^{q}$, and the changes of chart diffeomorphism preserve this division.

\begin{definition}
A \emph{foliated map} is a map $f: (M, \mathcal{F}) \longrightarrow (M', \mathcal{F}')$ between foliated manifolds which preserves the foliation structure, i.e. which maps  leaves of $\mathcal{F}$ into leaves of $\mathcal{F}'$.
\end{definition}

\medskip

Now, let $(M, \mathcal{F}$) and $(M', \mathcal{F}')$ be foliated manifolds and $f:M \longrightarrow M'$ be an immersion. Moreover, assume that $f$ is a foliated map. Thus
\begin{equation*}
f_{{\ast}_{x}}(L(x)) \subset L'(f(x))
\end{equation*}
for each $x\in M$, where $L=T(\mathcal F)$ and $L'=T(\mathcal F')$. In particular, it follows that $\dim(\mathcal F)\leq\dim(\mathcal F')$. The proof of the following proposition is quite standard and will be omitted:

\begin{proposition}\label{coordinates}
$(M, \mathcal{F}$) and $(M', \mathcal{F}')$ be foliated manifolds of dimension $n$ and $n'$, respectively, and $f:M \longrightarrow M'$ be a foliated immersion. Suppose that $\dim(\mathcal{F})=\dim(\mathcal{F'})$. Then for each $x\in M$ there are charts $\varphi: U \longrightarrow \mathbb{R}^{p}\times\mathbb{R}^{q}$ for $M$ about $x$ and $\varphi': U' \longrightarrow \mathbb{R}^{p}\times\mathbb{R}^{q'}$ for $M'$ about $f(x)$ such that
\begin{itemize}
  \item[(i)] $\varphi(x)=(0,\ldots,0)\in\mathbb{R}^{n}$
  \item[(ii)] $\varphi'(f(x))=(0,\ldots,0)\in\mathbb{R}^{n'}$
  \item[(iii)] $F (x_{1},\ldots,x_{n})=(x_{1},\ldots,x_{n},0,\ldots,0)$, where $F:=\varphi'\circ f \circ \varphi^{-1}$
  \item[(iv)] $L(x)=\emph{span}\left \{ \frac{\partial}{\partial x_{1}}(x),\ldots,\frac{\partial}{\partial x_{p}}(x)\right\}$
  \item[(v)] $L'(f(x))=\emph{span}\left\{\frac{\partial}{\partial x_{1}}(f(x)),\ldots,\frac{\partial}{\partial x_{p}}(f(x))\right\}$
\end{itemize}
where  $p=\dim(\mathcal F)=\dim(\mathcal F')$, $q=n-p$, $q'=n'-p$.
\end{proposition}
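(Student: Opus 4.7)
The plan is to reduce the statement to the ordinary local form of an immersion, adjusted to respect the two foliations. First I would pick arbitrary foliation charts $\psi\colon V\to \mathbb{R}^{p}\times\mathbb{R}^{q}$ around $x$ with $\psi(x)=0$ and $\psi'\colon V'\to\mathbb{R}^{p}\times\mathbb{R}^{q'}$ around $f(x)$ with $\psi'(f(x))=0$, shrinking $V$ so that $f(V)\subset V'$. A short connectedness argument then shows that each plaque of $V$ is sent by $f$ into a single plaque of $V'$: indeed, the intersection of the target leaf through $f(x)$ with $V'$ decomposes as a disjoint union of plaques of $V'$, each of which is closed in $V'$, and the continuous image of a connected plaque of $V$ cannot meet more than one of them. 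Writing $F=\psi'\circ f\circ\psi^{-1}$, this yields the product form
\[
F(x,y)=(F_{1}(x,y),\,h(y)),
\]
in which the transverse component depends only on the transverse variable.

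Next I would extract the relevant rank information at the origin. The Jacobian of $F$ is block lower-triangular with diagonal blocks $\partial_{x}F_{1}(0)$ and $h'(0)$. Because $f_{*}$ carries $L(x)$ into $L'(f(x))$, both of dimension $p$, and $f$ is an immersion, the restriction $f_{*}|_{L(x)}$ is a linear bijection, which forces $\partial_{x}F_{1}(0)$ to be invertible. The immersion hypothesis on $f$ then forces the remaining columns of the Jacobian to be linearly independent, so $h'(0)$ is injective.

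Now I would straighten the two blocks separately. Applying the standard local immersion lemma to $h$, one obtains a diffeomorphism $\alpha$ of a neighbourhood of $0$ in $\mathbb{R}^{q'}$ fixing $0$ with $\alpha\circ h(y)=(y,0)$. Setting $\varphi':=(\id_{\mathbb{R}^{p}},\alpha)\circ\psi'$ produces a chart on $M'$ that is still foliated, since only the transverse coordinates are altered, and in which $F$ becomes $(x,y)\mapsto(F_{1}(x,y),y,0)$. Define next $\varphi:=(F_{1},\mathrm{pr}_{2})\circ\psi$ on $M$: its Jacobian at $0$ is block triangular with invertible diagonal blocks $\partial_{x}F_{1}(0)$ and the identity, so $\varphi$ is a local diffeomorphism onto a neighbourhood of $0$; moreover this change of variables merely reparameterises each plaque (the transverse coordinate is preserved), so $\varphi$ is again a foliation chart. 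In the resulting coordinates $F$ acquires the desired inclusion form $(x_{1},\ldots,x_{n})\mapsto(x_{1},\ldots,x_{n},0,\ldots,0)$, and items (i)--(v) fall out directly from the construction.

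The only mildly delicate step is the plaque-to-plaque assertion in the first paragraph, which is needed to obtain the product structure of $F$ from the (a priori only leaf-to-leaf) foliated map hypothesis; once that is in hand, the rest is routine bookkeeping with local diffeomorphisms.
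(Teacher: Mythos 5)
Your argument is correct, and it supplies precisely the standard proof that the paper deliberately omits (``quite standard and will be omitted''): straighten both foliations, use the leaf-to-leaf hypothesis to get the triangular form $F(x,y)=(F_1(x,y),h(y))$, read off invertibility of $\partial_xF_1(0)$ and injectivity of $h'(0)$ from the immersion hypothesis, and normalise the two blocks by separate changes of chart that respect the plaque structure. One small point of hygiene in the plaque-to-plaque step: a connected set sitting inside a disjoint union of \emph{countably many} relatively closed sets need not lie in a single one in general, so rather than arguing with closed sets you should use that a leaf meets at most countably many plaques of $V'$ (as recalled in the paper), whence the transverse component of $\varphi'\circ f$ restricted to a connected plaque is a continuous map with countable, hence connected and countable, hence one-point image; this gives the same conclusion cleanly. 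With that adjustment the proof is complete.
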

Let $\mathcal F$ be a foliation on a manifold $M$ and let $L$ be a leaf of $\mathcal F$. It is well known that $L$ intersects at most a countable number of plaques in a foliated chart $U$. Now we give the following definition.

\begin{definition}[\cite{palais}]
A foliation $\mathcal F$ is said to be \emph{regular} if for any $x\in M$ there exists a foliated chart $U$ containing $x$ such that every leaf of $\mathcal F$ intersects at most one plaque of $U$.
\end{definition}

The following proposition is a generalisation to the non-compact case and to immersions of \cite[Proposition 3.1]{harada73-II}:
\begin{proposition}\label{prop-regularity}
Let $(M,\mathcal{F})$ and $(M',\mathcal{F}')$ be foliated manifolds such that $\dim(\mathcal{F})=\dim(\mathcal{F}')$. If there exists a foliated immersion $f: (M,\mathcal{F}) \longrightarrow (M',\mathcal{F}')$ and $\mathcal{F}'$ is regular, then $\mathcal{F}$ is also regular.
\end{proposition}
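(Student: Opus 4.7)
The plan is to transfer the regularity property from $\mathcal{F}'$ down to $\mathcal{F}$ through the adapted coordinates of Proposition \ref{coordinates}. Fix $x \in M$. By regularity of $\mathcal{F}'$, I would first choose a foliated chart $W'$ about $f(x)$ such that every leaf of $\mathcal{F}'$ meets at most one plaque of $W'$. Then I would shrink $W'$ to a ``distinguished'' sub-chart $V'$ of product form (a box in the coordinate representation) so that each plaque of $V'$ is connected and contained in a single plaque of $W'$; this guarantees that $V'$ itself inherits the at-most-one-plaque property, since two distinct plaques of $V'$ would necessarily sit in two distinct plaques of $W'$.

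Next, I would apply Proposition \ref{coordinates}, further shrinking $V'$ if necessary, to obtain charts $\varphi: U \to \R^{p}\times\R^{q}$ on $M$ around $x$ and $\varphi': U' \to \R^{p}\times\R^{q'}$ on $M'$ around $f(x)$, with $U' \subseteq V'$, in which $F = \varphi' \circ f \circ \varphi^{-1}$ is the linear inclusion $(x_1,\dots,x_n) \mapsto (x_1,\dots,x_n,0,\dots,0)$. After possibly replacing $U$ by a smaller product box, its plaques are the connected transverse slices indexed by $y \in \R^{q}$, and the form of $F$ tells us that the plaque of $U$ with transverse coordinate $y$ is sent into the plaque of $U'$ with transverse coordinate $(y,0) \in \R^{q}\times\R^{q'-q}=\R^{q'}$.

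Now suppose toward a contradiction that some leaf $L$ of $\mathcal{F}$ meets two distinct plaques $P_1, P_2$ of $U$. They correspond to distinct transverse values $y_1 \neq y_2 \in \R^{q}$, so $f(P_1)$ and $f(P_2)$ lie in two distinct plaques of $U'$ (their transverse coordinates $(y_1,0)$ and $(y_2,0)$ in $\R^{q'}$ differ). Because $f$ is a foliated map, $f(L)$ is contained in a single leaf $L'$ of $\mathcal{F}'$, so $L'$ meets two distinct plaques of $U'$, contradicting the regularity of $U' \subseteq V'$. Hence $U$ witnesses the regularity of $\mathcal{F}$ at $x$, and since $x$ is arbitrary the conclusion follows. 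The only delicate bookkeeping is the initial shrinking that ensures the at-most-one-plaque property passes to sub-charts (which is automatic once the sub-chart is a product box with connected slices lying in distinct plaques of the ambient chart); the rest is an immediate reading of the coordinate form supplied by Proposition \ref{coordinates}.
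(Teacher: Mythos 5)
Your proposal is correct and follows essentially the same route as the paper: apply Proposition \ref{coordinates} to get adapted charts, observe that the normal form of $F$ sends distinct plaques of $U$ into distinct plaques of $U'$, and use that $f$ maps the leaf $L$ into a single leaf $L'$ to contradict the regularity of $\mathcal{F}'$. Your extra bookkeeping step --- shrinking $U'$ into a chart $W'$ that actually witnesses regularity at $f(x)$ and checking the at-most-one-plaque property passes to product sub-charts --- is a point the paper's proof leaves implicit, so it is a welcome (if minor) refinement rather than a different argument.
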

\begin{proof}
Assume that $\mathcal{F}$ is not regular. Then there exists a point $x \in M$ and a leaf $L$ of $\mathcal{F}$ such that, for any foliated chart $U$ containing $x$, $L$ intersects more then one plaque in $U$. Let us consider the foliated charts $U$ and $U'$, respectively about $x$ and $f(x)$, satisfying the properties stated in Proposition \ref{coordinates}. Then there exist at least two plaques, say $P_1= \varphi^{-1}\left(\mathbb{R}^{p}\times\left\{\textbf{y}_1\right\}\right)$ and $P_1= \varphi^{-1}\left(\mathbb{R}^{p}\times\left\{\textbf{y}_2\right\}\right)$, such that
\begin{equation}\label{intersezione}
L\cap P_1 \neq \emptyset, \quad L\cap P_2 \neq \emptyset,
\end{equation}
where $\textbf{y}_{1},\textbf{y}_{2}\in\mathbb{R}^{q}$. Notice that, for each $i\in\left\{1,2\right\}$, $f(P_{i})$ is a plaque of $\mathcal{F}'$ in $U':=f(U)$. Indeed, using Proposition \ref{coordinates}, we have $f(P_{i})=f(\varphi^{-1}(\mathbb{R}^{p}\times \left\{\textbf{y}_i\right\}))=\varphi'^{-1}(F(\mathbb{R}^{p}\times\{\textbf{y}_{i}\}))=\varphi'^{-1}(\mathbb{R}^{p}\times \{(\textbf{y}_{i},0,\ldots,0)\})$. Now, since $f$ is a foliated map,  $L'=f(L)$ is a leaf of $\mathcal{F}'$ and from \eqref{intersezione} it follows that  $L'\cap f(P_1) \neq \emptyset$ and $L'\cap f(P_2) \neq \emptyset$. But this contradicts the regularity of $\mathcal{F'}$.
\end{proof}



\section{Classification}\label{classificazione}
In this Section we prove the main result of this paper, that is the classification of connected, regular $\eta$-Einstein Sasakian manifolds immersed into Sasakian space forms.


\begin{proof}[Proof of Theorem \ref{mainteor1}]
Let $M(N,c)$ be one of the non-compact simply connected Sasakian space forms and $\pi ': M(N,c)\rightarrow K'$ the (trivial) fibration over its K\"ahler quotient. Recall that $K'$ is either $\C^N$ with its flat K\"ahler metric or the hyperbolic K\"ahler space form $\mathbb{B}^N$.

Since $S$ is complete and regular, by \cite{R} there exists a fibration $\pi :S\rightarrow K$, whose fibers are the leaves of the Reeb foliation of $S$. By assumption $S$ is Sasakian $\eta$-Einstein and then $K$ is necessarily K\"ahler-Einstein (see \cite{IP}). The fibration $\pi :S\rightarrow K$ is a Riemannian submersion and since $S$ is complete then $K$ also is (see \cite{on1, IP}).

By assumption there exists an open neighbourhood $U_p$ of $p\in S$ and a Sasakian immersion $\varphi: U_p \rightarrow M(N,c)$. By Proposition \ref{prop-regularity} the submanifold $U_p$ is still regular. The restriction of $\pi$ to $U_p$ gives then a projection of the Sasakian $\eta$-Einstein manifold $U_p$ to the K\"ahler manifold $\pi(U_p) \subset K$.

The  Sasakian immersion  $\varphi: U_p \rightarrow M(N,c)$ covers a K\"ahler immersion $i(\varphi)$ (see \cite{CML,harada72}) making the following diagram commutative:
$$
\begin{CD}
U_p@>\varphi >> M(N,c) \\
@V\pi VV @VV\pi ' V\\
\pi (U_p)\subset K @>>i(\varphi)>K' 
\end{CD}
$$

We have proved that there exists $q=\pi(p)\in K$, an open neighbourhood $V_q=\pi (U_p)$ of $q$ and a K\"ahler immersion of $V_q$ in $K'$. By Umehara \cite{um} $V_q$ is flat or complex hyperbolic.

On the other end, by \cite{DTK} (see also \cite[Theorem 5.26]{BE}), the K\"ahler-Einstein manifold $K$ is real analytic and by \cite[Theorem 4 and Theorem 10 ]{Cal}, for every $q\in K$ there exists an open neighbourhood  $V'_q$ and a K\"ahler immersion of $V'_q$ in $K'$. Then $K$ is locally flat and \cite[Formula 1.31]{IP} implies that the $\phi$-sectional curvature of $S$ is less or equal to $-3$.

By Tanno \cite{tan2} there exists a discrete group $\Gamma$  of the Sasakian transformations of $S$ such that $S=M(n, c)/ \Gamma$ and this proves the first part of the theorem. 

For the second part of the theorem, let us suppose $U_p=S$. Reasoning as before, by completeness of $K$ and by Calabi's Rigidity Theorem \cite{Cal} for K\"ahler immersions into K\"ahler space forms (see also \cite{LoiZedda-book}) one obtains the stronger result that either $K= \C^n$ or $K=\mathbb{B}^{n}$ and the projection is just the trivial fibration because in both cases $K$ contractible. Then, since $S$ is complete, the fibres of the fibration are  diffeomorphic either to $\R$ or to $\s^1$. 

The second case cannot occur because $\varphi$ is a Sasakian immersion and then it restricts to immersions on the leaves of the Reeb foliations of $S$ and $M(N,c)$. But the leaves of $M(N,c)$ are diffeomorphic to $\R$ and if the leaves of $S$ are circles we would obtain immersions of the circle in $\R$ which is not possible.

Now it remains to prove that $\varphi$ is the standard embedding up to Sasakian transformations.

First observe that, again by Calabi's Rigidity Theorem, the immersion  $ i(\varphi) : K\rightarrow K'$ has (up to unitary transformation) the following form:
$$
i(\varphi)(z_1, \ldots , z_n)=(z_1, \ldots , z_n, 0,\ldots, 0).
$$
Because the fibrations are trivial $\varphi$ must have the following expression:
$$
\varphi(z_1, \ldots , z_n, t) = (z_1, \ldots , z_n, 0,\ldots, 0, f_N ((z_1, \ldots , z_n, t))).
$$
Since $\phi$ is a Sasakian immersion, in particular we have $\varphi^*(\eta_N)=\eta_n$,  where  $\eta_N$ and $\eta_n$ are the standard contact forms of $M(N,c)$ and $M(n, c)$ respectively. Then a direct calculation of $\varphi^*(\eta_N)=\eta_n$ yields $\frac{\partial f_N}{\partial t}=1$ and $\frac{\partial f_N}{\partial x_i}=\frac{\partial f_N}{\partial y_i}=0$ for $i=1, \ldots n$, where we put $z_j=x_j+i y_j$.
\end{proof}

%

\begin{remark}
In Theorem \ref{mainteor1} the case $U_p=S$ cannot occur if $S$ is compact because if $S$ is compact, a Sasakian immersion cannot exist for otherwise, from the regularity of a compact Sasakian manifold, we would obtain a (compact) K\"ahler quotient immersed either in $\C^N$ or in $\mathbb{B}^N$, which is impossible by the Maximum Principle.
\end{remark}

The following result is a variation of Theorem \ref{mainteor1}:
\begin{theorem}\label{mainteor2}
Let $S$ be a $(2n+1)$-dimensional connected, complete, $\eta$-Einstein Sasakian manifold. Suppose that for every $p\in S$ there exists an open neighbourhood $U_p$ of $p$ and a Sasakian immersion $\phi: U_p \rightarrow M(N,c)$, where $c\leq -3$. Then $S$ is Sasaki-equivalent to $M(n,c)/ \Gamma$ where $\Gamma$ is some discrete subgroup of the Sasakian-isometry group of $M(n,c)$. Moreover, if $U_p=S$ then $\Gamma=\{1\}$ and $\phi$ is, up to a Sasakian transformation of $M(N,c)$, given by 
$$
\phi(z,t)=(z, 0, t+c)
$$
\end{theorem}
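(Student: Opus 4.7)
The plan is to follow the architecture of the proof of Theorem \ref{mainteor1}, trading the single global regularity hypothesis for the assumed pointwise existence of Sasakian immersions. The key point is that in Theorem \ref{mainteor1} regularity was used only to globalise the transverse K\"ahler geometry into a fibration $\pi\colon S\to K$; the decisive conclusion, namely that the $\phi$-sectional curvature of $S$ equals $c$, is pointwise, so it can be extracted from purely local quotients.

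Concretely, I would fix an arbitrary $p\in S$ and choose a sufficiently small Reeb flow-box $W\subset U_p$ on which the Reeb foliation restricts to a trivial fibration $W\to V_p$ onto a local leaf space. The $\eta$-Einstein condition, via the standard transverse K\"ahler computation, makes $V_p$ K\"ahler--Einstein, and the restriction of $\phi$ to $W$ descends to a K\"ahler immersion of $V_p$ into $K'\in\{\C^N,\mathbb{B}^N\}$, exactly as in the commutative diagram in the proof of Theorem \ref{mainteor1}. Umehara's theorem, together with Calabi's rigidity and the real-analyticity of the K\"ahler--Einstein metric, then forces $V_p$ to have constant holomorphic sectional curvature; the relation \cite[Formula 1.31]{IP} between transverse holomorphic curvature and $\phi$-sectional curvature translates this into the statement that the $\phi$-sectional curvature of $S$ at $p$ equals $c$. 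Since $p$ was arbitrary, $S$ is a complete connected Sasakian manifold of constant $\phi$-sectional curvature $c$, so Tanno's classification gives $S \simeq M(n,c)/\Gamma$.

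For the second half, suppose $U_p=S$ for some $p$. A Sasakian immersion satisfies $\phi_*\xi_S=\xi_{M(N,c)}$ and is therefore a foliated map between the two Reeb foliations; the Reeb foliation of $M(N,c)$ is regular since $M(N,c)$ fibres globally and trivially over $K'$. Proposition \ref{prop-regularity} then forces $S$ itself to be regular, and we are back exactly in the hypotheses of Theorem \ref{mainteor1}. The closing paragraphs of that proof then apply verbatim to give $\Gamma=\{1\}$ and the explicit normal form $\phi(z,t)=(z,0,t+c)$.

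The main obstacle I foresee is the first step: in the absence of a global leaf space, one has to verify carefully that the transverse K\"ahler structure of the Reeb foliation really does descend to a bona fide K\"ahler--Einstein metric on each small quotient $V_p$, that the local descent of $\phi|_W$ is a genuine K\"ahler immersion, and that the pointwise curvature invariants extracted at different points are compatible, so that they can be assembled into a single constant-$\phi$-sectional-curvature statement on all of $S$. This is a standard foliated-submersion argument, but it must be set up with some care precisely because we have no global quotient to fall back on.
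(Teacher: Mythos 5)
Your proposal is correct and follows essentially the same route as the paper's own proof: localise to a flow-box where the Reeb foliation fibres over a local K\"ahler leaf space, run the Umehara--Calabi argument of Theorem \ref{mainteor1} pointwise to get constant $\phi$-sectional curvature and invoke Tanno, and in the case $U_p=S$ use Proposition \ref{prop-regularity} to deduce regularity of $S$ from that of $M(N,c)$ and then apply Theorem \ref{mainteor1} directly. The paper's version is terser but identical in substance.
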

\begin{proof}
For every point  $p\in S$ we have an immersion of some $U_p$ . After possibly shrinking the open set $U_p$ we obtain an open set where the Reeb foliation is given by a fibration over a K\"ahler base. Then we proceed exactly as in the proof of Theorem \ref{mainteor1} and we obtain that $U_p$ (and then $S$) has constant $\phi$-sectional curvature at every point. Then $S$ is Sasaki-equivalent to $M(n,c)/ \Gamma$ where $\Gamma$ is some discrete subgroup of the Sasakian-isometry group of $M(n,c)$.

If $U_p=S$ we cannot directly conclude as in Theorem \ref{mainteor1} because a priori we don't know if $M(n,c)/ \Gamma$ is regular. On the other end we are assuming the existence of an immersion of $U_p=S$ into the regular Sasakian space form $M(N,c)$ and then $S$ is regular by Proposition \ref{prop-regularity}. We can now apply Theorem \ref{mainteor1} to conclude.
\end{proof}


\end{document}